\title[Random replacements in P{\'o}lya urns]
{Random replacements in P{\'o}lya urns with infinitely many colours}
\date{27 November, 2017}
\author{Svante Janson}
\thanks{Partly supported by the Knut and Alice Wallenberg Foundation}
\address{Department of Mathematics, Uppsala University, PO Box 480,
SE-751~06 Uppsala, Sweden}
\email{svante.janson@math.uu.se}
\urladdr{http://www.math.uu.se/svante-janson}
\subjclass[2010]{} 
\numberwithin{equation}{section}
\renewcommand\le{\leqslant}
\renewcommand\ge{\geqslant}
\theoremstyle{plain}% default
\newtheorem{theorem}{Theorem}[section]
\newtheorem{lemma}[theorem]{Lemma}
\theoremstyle{definition}
\newtheorem{example}[theorem]{Example}
\newtheorem{remark}[theorem]{Remark}
\theoremstyle{remark}
\newenvironment{romenumerate}[1][-10pt]{% optional argument changes indentation
\addtolength{\leftmargini}{#1}\begin{enumerate}% gives (i), (ii) etc.
 }{\end{enumerate}}
\newcounter{oldenumi}
\newenvironment{romenumerateq}% continues numbering from previous romenumerate
{\setcounter{oldenumi}{\value{enumi}}
\begin{romenumerate} \setcounter{enumi}{\value{oldenumi}}}
{\end{romenumerate}}
\newcounter{thmenumerate}
\newcounter{xenumerate}   %no left indentation; thus wider lines
\newcommand{\refT}[1]{Theorem~\ref{#1}}
\newcommand{\refL}[1]{Lemma~\ref{#1}}
\newcommand{\refS}[1]{Section~\ref{#1}}
\newcommand{\refSS}[1]{Section~\ref{#1}}
\newcommand{\refE}[1]{Example~\ref{#1}}
\xdef\klockan{\the\count1.0\the\count255}
\xdef\klockan{\the\count1.\the\count255}\fi
\newcommand\set[1]{\ensuremath{\{#1\}}}
\newcommand\bigset[1]{\ensuremath{\bigl\{#1\bigr\}}}
\newcommand\bigpar[1]{\bigl(#1\bigr)}
\newcommand\Bigabs[1]{\Bigl|#1\Bigr|}
\def\rompar(#1){\textup(#1\textup)}    % usage: \rompar(...)
\def\xexp(#1){e^{#1}}
\newcommand\ntoo{\ensuremath{{n\to\infty}}}
\newcommand\punkt{.\spacefactor=1000}    % om problem!
\newcommand\iid{i.i.d\punkt}    
\newcommand\ie{i.e\punkt}
\newcommand\eg{e.g\punkt}
\newcommand{\as}{a.s\punkt}
\newcommand{\aex}{a.e\punkt}
\newcommand\eqd{\overset{\mathrm{d}}{=}}
\newcommand\bbR{\mathbb R}
\newcommand\bbZ{\mathbb Z}
\newcommand\bbZgeo{\mathbb Z_{\ge0}}
\newcommand\bbRgeo{\mathbb R_{\ge0}}
\newcounter{CC}
\newcounter{cc}
\renewcommand\P{\operatorname{\mathbb P{}}}
\newcommand\PD{\operatorname{PD}}
\newcommand\gd{\delta}
\newcommand\gf{\varphi}
\newcommand\gl{\lambda}
\newcommand\gs{\sigma}
\newcommand\cA{\mathcal A}
\newcommand\cB{\mathcal B}
\newcommand\cC{\mathcal C}
\newcommand\cF{\mathcal F}
\newcommand\cL{{\mathcal L}}
\newcommand\cM{\mathcal M}
\newcommand\cN{\mathcal N}
\newcommand\cP{\mathcal P}
\newcommand\cR{{\mathcal R}}
\newcommand\cS{{\mathcal S}}
\newcommand\cT{{\mathcal T}}
\newcommand\tR{{\tilde R}}
\newcommand\tS{{\tilde S}}
\newcommand\tX{{\widetilde X}}
\newcommand\etta{\boldsymbol1}
\newcommand\qw{^{-1}}
\newcommand\oi{\ensuremath{[0,1]}}
\newcommand\ooo{[0,\infty)}
\newcommand\oooo{[0,\infty]}
\newcommand\setoi{\set{0,1}}
\newcommand\dd{\,\mathrm{d}}
\newcommand\lhs{left-hand side}
\newcommand\rhs{right-hand side}
\newcommand\citetyear[1]{\citet{#1} (\citeyear{#1})}
\newcommand\cMx{\cM_*}
\newcommand\cNx{\cN_*}
\newcommand\cMpm{\cM_\pm}
\newcommand\gsf{$\gs$-field}
\newcommand\Uoi{\mathrm{U}(0,1)}
\newcommand\sh{^\sharp}
\newcommand\phix{\phi\sh}
\newcommand\psix{\psi\sh}
\newcommand\RR{\widehat R}
\newcommand\xoo{_0^\infty}
\newcommand\tensor{\otimes}
\newcommand\pk{probability kernel}
\newcommand\mdot{\cdot}
\newcommand\cmo{\cM_0}
\newcommand\tcmo{\widetilde\cM_0}
\newcommand{\Polya}{P\'olya}
\begin{document}

\begin{abstract} 
We consider the general version of P{\'o}lya urns 
recently studied by 
Bandyopadhyay and Thacker (2016+) % \citeauthoryearsj{BTnew}
and
Mailler and  Marckert (2017), %  \citeauthoryearsj{MM},
with the space of colours being any Borel space $S$ and the state of the urn
being a finite measure on $S$.
We consider urns with random replacements, and show that these can be
regarded as urns with deterministic replacements using the colour space
$S\times\oi$.
\end{abstract}

\maketitle

\section{Introduction}\label{S:intro}

The  original \Polya{} urn,
studied already in \citeyear{Markov1917} by \citet{Markov1917}
but later named after 
\Polya{} who studied it in 
\citetyear{EggPol1923} and \citetyear{Polya1930},
 contains balls of two colours.
At discrete time steps, a ball is drawn
at random from the urn (uniformly), and it is replaced together with $a$
balls of the 
same colour, where $a\ge1$ is some given constant.
The (contents of the) 
urn is thus a Markov process $(X_n)\xoo$, with state space $\bbZgeo^2$.
(The initial state $X_0$ is some arbitrary given non-zero state.)

This urn model has been generalized by various authors in a number of ways,
all keeping the basic idea of a Markov process of sets of balls of
different colours (types), where balls are drawn at random and the drawn
balls determine the next step in the process.
(The extensions are all
usually called \Polya{} urns, or perhaps generalized \Polya{} urns.)
These generalizations have been studied by a large number of authors, and
have found a large number of applications, see for example % \eg{}
\cite{Mahmoud}, \cite{SJ154}, \cite{BTnew}, \cite{MM}
and the references given there.
The extensions include (but are not limited to) the following, 
in  arbitrary combinations.

\begin{romenumerate}

\item 
The number of different colours can be any finite integer $d\ge2$.
The state space is thus $\bbZgeo^d$.
\item \label{UR}
The new balls added to the urn can be of any colours.
We have a (fixed) replacement matrix $(R_{i,j})_{i,j=1}^d$ of non-negative
integers; when a ball of colour $i$ is drawn, it is replaced together with
$R_{i,j}$ new balls of colour $j$, for every $j=1,\dots,d$.
\item \label{Urand}
The replacements can be random. Instead of a fixed replacement matrix as in
\ref{UR}, we have for each colour $i$ a random vector $(R_{i,j})_{j=1}^d$.
Each time a ball of colour $i$ is drawn, 
replacements are made according to 
a new copy of this vector, independent of everything that has happened so far.
\item \label{Ureal}
The ``numbers of balls'' of different colours can be arbitrary non-negative
real numbers (which can be interpreted as the amount or mass of each colour).
The state space is thus $\bbRgeo^d$, and the replacement matrix $(R_{i,j})$
in \ref{UR}, or its random version in \ref{Urand}, has arbitrary entries in
$\bbRgeo$. 
\item \label{U<0}
Balls may also be removed from the urn. This means that $R_{i,j}$ in
\ref{UR} or \ref{Urand} may be negative. (Some conditions are required in
order to guarantee that we never remove balls that do not exist; the state
space is still $\bbZgeo^d$ or $\bbRgeo^d$.)
The simplest case,
which frequently appears in applications, 
is drawing without replacement;
then $R_{i,i}=-1$ is allowed but 
$R_{i,j}\ge0$ when $i\neq j$, 
this means that the drawn ball is
not replaced (but balls of other colours are added).
\end{romenumerate}

In contrast to the many papers on \Polya{} urns with a finite number of
colours,
there has so far been very few studies of extensions to infinitely many
colours. 
One example is \citet{BTinfty,BTrate} who studied the case when the space
of colours is $\bbZ^d$, and the replacements are translation invariant.
A very general version of \Polya{} urns
was introduced by \citet{BlackwellMcQ} 
in a special case (with every replacement having the colour of the drawn
ball, as in the original \Polya{} urn, see \refE{E1}),
and 
much more generally 
(with rather arbitrary deterministic replacements) 
by \citet{BTnew} and \citet{MM};
this version can be described by:
\begin{romenumerateq}
\item\label{Upolish} 
The space $S$ of colours is a measurable space.
The state space is now the space $\cM(S)$ of finite measures on $S$;
if the current state is $\mu$, then the next ball is drawn with the
distribution
$\mu/\mu(S)$.
\end{romenumerateq}

This version seems very powerful, and can be expected to find many
applications in the future.

\begin{remark}
  Note that the case when $S$ is finite in \ref{Upolish} is equivalent to
  the version \ref{Ureal}.
Also with an infinite $S$ in \ref{Upolish}, a state $\mu\in\cM(S)$ of the
process can be interpreted as the amount of different colours in the urn.
(The amount is thus now described by a measure; note that 
the measure may be diffuse,
meaning that each single colour has mass 0). 
\end{remark}
\begin{remark}\label{Rborel}
The colour space $S$ is assumed to be a Polish topological space 
in \cite{BlackwellMcQ} and \cite{MM}, and for the convergence results
in \cite{BTnew}, while the representation results in \cite{BTnew} are stated
for a general $S$.
We too make our definitions for an arbitrary measurable space $S$, but
we restrict to Borel spaces in our main result.
(This includes the case of a Polish space, 
see \refL{Lborel} below. Our results do not use any topology on $S$.)
\end{remark}

The purpose of the present note is to show that this model 
with a measure-valued \Polya{} urn
and the results for it
by \cite{BTnew} and \cite{MM}
extend almost automatically to the case of random replacements,
at least in the case with no removals.
In fact, we show that the model is so flexible that a random replacement 
can be seen as a deterministic replacement using the larger colour space
$S\times\oi$, where the extra coordinate is used to simulate the randomization.
Random replacement in this general setting was raised as an open problem in
\cite{MM}, and our results 
together with the results of \cite{MM}
thus answer this question.
%(At least in principle; for a concrete application it remains to verify
%the conditions in \cite{MM}.)

We give a precise definition of the
measure-valued
version of  \Polya{} urns with random
replacement in \refS{Sdef}.
We include there %in \refS{Sdef}
a detailed treatment of measurability questions,  showing that
there are no such problems.
(This was omitted in 
\cite{BTnew} and \cite{MM}, where 
the situation is simpler and straightforward.
In our, technically more complex, situation,
there is a need to  verify  measurability explicitly.)

The main theorem is the following. The proof is given in \refS{SpfT1}.

\begin{theorem}\label{T1}
Consider a measure-valued \Polya{} urn process $(X_n)\xoo$
in  a Borel space $S$,
with random replacements.
Then there exists a \Polya{} urn process $(\tX_n)\xoo$ in 
$S\times \oi$
with deterministic replacements 
such that $\tX_n=X_n\times \gl$ and thus $X_n=\pi\sh(\tX_n)$ for every $n\ge0$,
where 
$\gl$ is the Lebesgue measure,
$\pi:X\times\oi\to X$ is the projection,
and $\pi\sh$ the corresponding mapping of measures.
\end{theorem}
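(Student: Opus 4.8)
The plan is to use the second coordinate of $S\times\oi$ as an independent source of randomness: a colour $(x,u)$ with $u$ distributed as $\Uoi$ carries, besides the ``true'' colour $x$, a uniform random seed $u$ that can drive the randomization of the replacement. Concretely, I would first encode the random replacement rule as a \pk{} $\kappa$ from $S$ to $\cM(S)$, so that drawing a ball of colour $x$ amounts to adding a random measure $R$ with distribution $\kappa(x,\cdot)$, independently of the past. That this defines a genuine measure-valued Markov process is exactly the content of the definitions and measurability statements in \refS{Sdef}.

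The key tool is a measurable randomization of $\kappa$. Since $S$ is a Borel space, the space $\cM(S)$ of finite measures on $S$ is again a Borel space, and hence there is a measurable map $F\colon S\times\oi\to\cM(S)$ such that for each fixed $x$ the measure $F(x,U)$ has distribution $\kappa(x,\cdot)$ whenever $U\sim\Uoi$. This is the standard functional (transfer) representation of a probability kernel between Borel spaces, and it is precisely here that the Borel hypothesis on $S$ enters. Using $F$ I would define the deterministic replacement for the urn on $S\times\oi$: when a ball of colour $(x,u)$ is drawn, add the product measure $F(x,u)\times\gl$. One checks that $(x,u)\mapsto F(x,u)\times\gl$ is a measurable map $S\times\oi\to\cM(S\times\oi)$, so this is a legitimate deterministic replacement rule, and I start the urn at $\tX_0=X_0\times\gl$.

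The heart of the argument is an induction showing that the product structure is preserved, \ie\ $\tX_n=X_n\times\gl$ for all $n$, where $X_n:=\pi\sh(\tX_n)$. Assuming $\tX_n=X_n\times\gl$, the drawing distribution $\tX_n/\tX_n(S\times\oi)=\bigpar{X_n/X_n(S)}\times\gl$ factorises, so the drawn colour has the correct law $X_n/X_n(S)$ on $S$ while its second coordinate $u$ is an independent $\Uoi$ variable, fresh at each step. Adding $F(x,u)\times\gl$ then yields $\tX_{n+1}=\bigpar{X_n+F(x,u)}\times\gl$, again of product form, and putting $X_{n+1}:=X_n+F(x,u)$ completes the induction and gives $X_n=\pi\sh(\tX_n)$. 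Because $u$ is uniform and independent of $x$ and of the history, $F(x,u)$ has, conditionally on $x$, exactly the law $\kappa(x,\cdot)$ and is conditionally independent of the past; hence the projected process $(\pi\sh\tX_n)$ is a version of the random-replacement urn, which realises the desired coupling $\tX_n=X_n\times\gl$.

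The main obstacle is the measurable randomization step in the second paragraph, together with the bookkeeping needed to keep the product structure $X_n\times\gl$ intact through the recursion. The former relies on $\cM(S)$ being a Borel space and on a measurable-selection/transfer theorem; the latter is routine once the measurability framework of \refS{Sdef} is available, since all the maps involved (drawing, forming products with $\gl$, and pushing forward under $\pi$) are there shown to be measurable.
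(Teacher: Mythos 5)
Your proposal is correct and follows essentially the same route as the paper: the randomization lemma (Kallenberg's functional representation of a kernel into the Borel space $\cM(S)$, the paper's \refL{L2} combined with \refL{LMB}) yields a measurable $f:S\times\oi\to\cM(S)$, the deterministic kernel is $\tR_{s,u}=f(s,u)\times\gl$, and induction on the product structure $\tX_n=X_n\times\gl$, using that the drawing measure factorises so the second coordinate is a fresh independent uniform seed, completes the argument. The only cosmetic difference is that you build $(\tX_n)$ first and identify $\pi\sh(\tX_n)$ as a version of the random-replacement urn, while the paper couples with the given $(X_n)$ explicitly via the transfer theorem \cite[Theorem 6.10]{Kallenberg}; this does not affect correctness.
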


Urns without replacement or with other removals,
 see \ref{U<0},
are treated in \refS{Sneg}.
We show that \refT{T1} holds in this case too,
but the result in this case is less satisfactory than in the case without
removals, and it cannot be directly applied to extend the results for this
case in \cite{MM}, see \refS{Sneg}.

\begin{remark}
Many papers, including \cite{BTnew} and \cite{MM},
consider only \emph{balanced} \Polya{} urns,
\ie, urns where the total number of balls added to the urn each time
is deterministic, and thus the total number of balls in the urn after $n$
steps
is a deterministic linear function of $n$;
in the measure-valued context, this means that the total mass $X_n(S)=an+b$,
where $b=X_0(S)$. (We may without loss of generality assume $a=1$ by rescaling.)
We have no need for this assumption in the present paper.
\end{remark}

\section{Preliminaries}\label{Sprel}

We state some more or less well-known definitions and facts, adding a few
technical details. 

\subsection{Measurable spaces}\label{SSspaces}

A measurable space $(S,\cS)$ is a set $S$ equipped with a \gsf{} $\cS$ of
subsets of $S$. We often abbreviate $(S,\cS)$ to $S$ when the \gsf{} is evident.
When $S=\oi$ or another Polish topological space (i.e., a
complete metric space), we tacitly assume $\cS=\cB(S)$, the Borel \gsf{}
generated by the open subsets.

For a measurable space $S=(S,\cS)$, let $\cM(S)$ be the set of finite measures
on $S$, let $\cMx(S):=\set{\mu\in\cM(S):\mu\neq0}$ and 
$\cP(S):=\set{\mu\in\cM(S):\mu(S)=1}$, the set of probability measures
on $S$;
furthermore, let $\cMpm(S)$ be the space of finite signed measures on $S$.
We regard $\cMpm(S)$, $\cM(S)$, $\cMx(S)$ and $\cP(S)$
as measurable spaces, equipped with the \gsf{} generated by
the mappings $\mu\mapsto \mu(B)$, $B\in\cS$;
note that $\cMx(S)$ and $\cP(S)$ are measurable subsets of $\cM(S)$.
(See \eg{} \cite[Chapter 1, p.~19]{Kallenberg}, but note that $\cM(S)$ there
is larger than ours.)

If $f\ge0$ is a measurable function on a measurable space $S$
and $\mu$ is a measure on $S$,
let $\mu(f):=\int_S f\dd\mu\in[0,\infty]$.
Note that the mapping $\mu\mapsto\mu(f)$ is measurable $\cM(S)\to[0,\infty]$
for every fixed $f\ge0$.

If $S$ and $T$ are measurable spaces, and $\gf:S\to T$ is a measurable mapping,
then, as in \refT{T1} above,
 $\gf\sh:\cM(S)\to\cM(T)$ denotes the induced mapping of measures, defined
by $\gf\sh(\mu)(B)=\mu(\gf\qw(B))$ for $\mu\in\cM(S)$ and $B\in\cT$.

If $X$ is a random element of $S$, its distribution is an element
of $\cP(S)$, denoted by  $\cL(X)$.

A signed measure $\mu\in\cMpm(S)$ has a Jordan decomposition
$\mu=\mu^+-\mu^-$ with 
$\mu^+,\mu^-\in\cM(S)$, and the variation of $\mu$ is
$|\mu|=\mu^++\mu^-\in\cM(S)$; see \cite[Chapter 4]{Cohn}.

\subsection{Borel spaces}
A \emph{Borel space} is a measurable space that is isomorphic to a Borel
subset of $\oi$.
This can be reformulated by the following standard result.
\begin{lemma}\label{Lborel}
  The following are equivalent for a measurable space $(S,\cS)$, and thus
each property  characterizes Borel  spaces.
  \begin{romenumerate}
  \item 
$(S,\cS)$ is isomorphic to a Borel subset of a Polish space.
  \item 
$(S,\cS)$ is isomorphic to a Borel subset of $\oi$.
  \item 
$(S,\cS)$ is isomorphic to a Polish space.
\item 
$(S,\cS)$ is isomorphic to a compact metric space.
  \item 
$(S,\cS)$ is either countable (with all subsets	measurable), 
or isomorphic to $\oi$.
  \end{romenumerate}
\end{lemma}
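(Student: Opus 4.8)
The plan is to prove the five statements equivalent by a cycle of implications, isolating the single non-elementary ingredient, the classical Borel isomorphism theorem. Most of the implications are routine and I would dispose of them first. Statement (ii) $\Rightarrow$ (i) is immediate, since $\oi$ is Polish; (iii) $\Rightarrow$ (i) is immediate, since a Polish space is a Borel subset of itself; and (iv) $\Rightarrow$ (iii) holds because a compact metric space is complete and separable, hence Polish. For (v) $\Rightarrow$ (iv), a finite set is trivially a compact metric space, a countably infinite set with all subsets measurable is isomorphic as a measurable space to the compact set $\set{0}\cup\set{1/n:n\ge1}\subseteq\oi$ (whose Borel \gsf{} is the collection of \emph{all} subsets, the space being countable), and $\oi$ is itself compact metric. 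The same embedding shows (v) $\Rightarrow$ (ii), using also that $\oi$ is a Borel subset of itself.

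The one substantial step is (i) $\Rightarrow$ (v), and this is where I would do the real work. Assume $(S,\cS)$ is isomorphic to a Borel subset $B$ of a Polish space, so that $(B,\cB(B))$ is a standard Borel space, and split according to the cardinality of $B$. If $B$ is countable, then each singleton is closed, hence Borel, in the ambient metric space, so every subset of $B$ is a countable union of singletons and therefore Borel; thus $S$ is countable with all subsets measurable, which is the first alternative of (v). If $B$ is uncountable, I would invoke the Borel isomorphism theorem, which asserts that any two uncountable standard Borel spaces are Borel isomorphic; applied to $B$ and $\oi$ it yields an isomorphism of $B$, and hence of $S$, with $\oi$, which is the second alternative of (v).

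Assembling these implications gives the cycle (i) $\Rightarrow$ (v) $\Rightarrow$ (iv) $\Rightarrow$ (iii) $\Rightarrow$ (i) together with (v) $\Rightarrow$ (ii) $\Rightarrow$ (i), so all five statements are equivalent. The main obstacle is the appeal to the Borel isomorphism theorem in the uncountable case: this is the only genuinely deep input, and rather than reprove it I would cite a standard reference such as \cite{Kallenberg} or \cite{Cohn}. Everything else reduces to two elementary facts, namely that points are measurable in a metric space, and that a countable measurable space with all subsets measurable is determined up to isomorphism by its cardinality.
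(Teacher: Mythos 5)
Your proposal is correct, and it takes essentially the same route as the paper: the paper gives no argument of its own but simply defers to \cite[Theorem 8.3.6]{Cohn} and \cite[Theorem I.2.12]{Parthasarathy}, whose content is exactly the classical Borel isomorphism theorem that you isolate as the single deep ingredient (the paper's remark that any two Borel spaces of the same cardinality are isomorphic corresponds to your countable/uncountable case split in (i)~$\Rightarrow$~(v)). Your elementary implications and the embedding of a countable space onto $\set{0}\cup\set{1/n:n\ge1}\subseteq\oi$ are the standard glue found in those references, so there is nothing to fix.
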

For a proof, see \eg{} \cite[Theorem 8.3.6]{Cohn} or 
\cite[Theorem  I.2.12]{Parthasarathy}.
An essentially equivalent statement is that any two Borel  spaces
with the same cardinality are isomorphic.

In \refT{T1}, we consider only Borel spaces; 
\refL{Lborel} shows that this is no great loss of generality for
applications.

\begin{lemma}\label{Labs}
  If $S$ is a Borel space, then the mappings 
%$\mu\mapsto|\mu|(S):  \cMpm(S)\to\bbR$ 
%and
$\mu\mapsto|\mu|$,
$\mu\mapsto\mu^+$
and
$\mu\mapsto\mu^-$  
are measurable $\cMpm(S)\to\cM(S)$.
In particular, $\cM(S)$ is a measurable subset of $\cMpm(S)$.
\end{lemma}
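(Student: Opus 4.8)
The plan is to use the defining property of the \gsf{} on $\cM(S)$. Since this \gsf{} is generated by the evaluation maps $\nu\mapsto\nu(B)$, $B\in\cS$, a map $F\colon\cMpm(S)\to\cM(S)$ is measurable as soon as $\mu\mapsto F(\mu)(B)$ is measurable for every fixed $B\in\cS$. Moreover, from the Jordan decomposition one has $\mu^+=\half\bigpar{\abs\mu+\mu}$ and $\mu^-=\half\bigpar{\abs\mu-\mu}$, and $\mu\mapsto\mu(B)$ is measurable by definition; hence it suffices to handle the single map $\mu\mapsto\abs\mu$, and measurability of $\mu\mapsto\mu^+$ and $\mu\mapsto\mu^-$ will follow at once. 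Thus everything reduces to showing that, for each fixed $B\in\cS$, the function $\mu\mapsto\abs\mu(B)$ is measurable on $\cMpm(S)$.

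The key point --- and the only place the Borel hypothesis is needed --- is to write $\abs\mu(B)$ as a \emph{countable} supremum of measurable functions, rather than the uncountable supremum over all finite partitions that defines the total variation. By \refL{Lborel} we may assume $S$ is a Borel subset of $\oi$, since all the maps in question are preserved under a measurable isomorphism. For each $n$ let $\cP_n$ be the finite measurable partition of $S$ into the nonempty sets $[k2^{-n},(k+1)2^{-n})\cap S$, $0\le k<2^n$. These partitions are refining and together generate $\cS$. I claim that for every $\mu\in\cMpm(S)$,
\begin{equation*}
\abs\mu(S)=\sup_n\sum_{A\in\cP_n}\abs{\mu(A)} .
\end{equation*}
That the right-hand side is nondecreasing in $n$ and bounded by $\abs\mu(S)$ is the triangle inequality for refinements; the reverse inequality is the assertion that refining along the generating sequence $(\cP_n)$ recovers the full variation. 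This is a standard fact, which I would prove by writing $h:=\ddx\mu/\ddx\abs\mu\in\set{\pm1}$ and noting that $\sum_{A\in\cP_n}\abs{\mu(A)}=\int_S\abs{h_n}\dd\abs\mu$ with $h_n:=\E\bigsqpar{h\mid\cP_n}$ the conditional expectation with respect to $\abs\mu$; since $(\cP_n)$ generates $\cS$, the martingale convergence theorem gives $h_n\to h$ in $L^1(\abs\mu)$, whence $\int\abs{h_n}\dd\abs\mu\to\int\abs h\dd\abs\mu=\abs\mu(S)$.

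Applying this identity to the restricted signed measure $\mu(\mdot\cap B)\in\cMpm(S)$, whose variation is $\abs\mu(\mdot\cap B)$, yields for every $B\in\cS$
\begin{equation*}
\abs\mu(B)=\sup_n\sum_{A\in\cP_n}\abs{\mu(A\cap B)} .
\end{equation*}
Each $A\cap B$ lies in $\cS$, so $\mu\mapsto\mu(A\cap B)$ is measurable; hence each partial sum is measurable, and so is their supremum over the countable set of indices $n$. Thus $\mu\mapsto\abs\mu(B)$ is measurable for every $B$, giving the measurability of $\mu\mapsto\abs\mu$ and, as noted, of $\mu\mapsto\mu^+$ and $\mu\mapsto\mu^-$. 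Finally, a finite signed measure is nonnegative exactly when $\mu^-=0$, equivalently $\mu^-(S)=0$; since $\mu\mapsto\mu^-(S)$ is measurable, $\cM(S)=\set{\mu\in\cMpm(S):\mu^-(S)=0}$ is a measurable subset of $\cMpm(S)$. The main obstacle is the countable-supremum representation of the total variation in the second paragraph; once that is in place, the rest is routine.
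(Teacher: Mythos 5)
Your proof is correct and follows essentially the same route as the paper: reduce to measurability of $\mu\mapsto|\mu|(B)$, use \refL{Lborel} to embed $S$ in $\oi$, and express $|\mu|(B)$ as a countable limit (here a supremum over refining dyadic partitions, in the paper a limit over mesh-$1/n$ partitions) of sums $\sum_A|\mu(A\cap B)|$, then get $\mu^\pm=\half(|\mu|\pm\mu)$ and $\cM(S)=\set{\mu:\mu^-=0}$. The only difference is that you additionally justify the key identity via the martingale $h_n=\E\bigsqpar{h\mid\cP_n}$, a standard fact the paper states without proof; just make sure your partitions also cover the point $1$ (e.g.\ adjoin $\set1\cap S$ as an atom), a triviality.
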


\begin{proof}
By \refL{Lborel}, we may assume that $S$ is a Borel subset of $\oi$. 
Then, for every $B\in\cS$ and $\mu\in\cMpm(S)$,
\begin{equation}
  |\mu|(B) = 
\lim_{n\to\infty}
\sum_{i=0}^n\Bigabs{\mu\bigpar{B\cap[\tfrac{i}{n},\tfrac{i+1}{n})}}, 
\end{equation}
which shows that $\mu\mapsto|\mu|(B)$ is measurable.
Hence $\mu\mapsto|\mu|$ is measurable.
Furthermore, $\mu^\pm=\frac12(|\mu|\pm\mu)$, and
$\cM(S)=\set{\mu\in\cMpm(S):\mu^-=0}$.
%Consequently, $\mu^\pm=\frac12(|\mu|\pm\mu)$ are measurable.
\end{proof}

\begin{lemma}\label{LMB}
  If $S$ is a Borel space, then $\cMpm(S)$, 
$\cM(S)$, $\cMx(S)$ and $\cP(S)$ are Borel  spaces. 
\end{lemma}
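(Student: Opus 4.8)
The plan is to reduce everything to the classical fact that the space of probability measures on a Polish space is again Polish, hence a Borel space, and then to bootstrap from $\cP(S)$ to the other three spaces.

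First I would use \refL{Lborel} to assume that $S$ is a Polish space. This is harmless: a measurable isomorphism $\gf\colon S\to S'$ induces, via $\mu\mapsto\gf\sh(\mu)$, bijections $\cMpm(S)\to\cMpm(S')$, $\cM(S)\to\cM(S')$, $\cMx(S)\to\cMx(S')$ and $\cP(S)\to\cP(S')$ that are measurable with measurable inverses, since all the relevant $\gs$-fields are generated by the evaluations $\mu\mapsto\mu(B)$. The key input is then the classical result (see \cite{Parthasarathy} or \cite{Kallenberg}) that for Polish $S$ the set $\cP(S)$, with the topology of weak convergence, is Polish, and that its Borel $\gs$-field coincides with the $\gs$-field generated by the evaluations. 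The point that needs checking is this identification of $\gs$-fields: the evaluation $\gs$-field is contained in the Borel one because each $\mu\mapsto\mu(B)$ is Borel for the weak topology, and conversely the weak topology is second countable and is the initial topology of countably many maps $\mu\mapsto\mu(f)$ with $f$ bounded continuous, each of which is evaluation-measurable (approximate $f$ by simple functions). Thus $\cP(S)$ is a Borel space.

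Next I would pass to $\cMx(S)$ and $\cM(S)$. The map $\mu\mapsto\bigpar{\mu(S),\mu/\mu(S)}$ is a measurable bijection $\cMx(S)\to(0,\infty)\times\cP(S)$ with measurable inverse $(t,\nu)\mapsto t\nu$, and a finite product of Borel spaces is a Borel space, so $\cMx(S)$ is a Borel space. Adjoining the single measurable point $\set{0}$ (note $\set{0}=\set{\mu:\mu(S)=0}$) shows that $\cM(S)$ is a Borel space too. Since $\cMx(S)$ and $\cP(S)$ are measurable subsets of $\cM(S)$, and a measurable subset of a Borel space is a Borel space, these three spaces are settled.

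Finally, for $\cMpm(S)$ I would use \refL{Labs}. By that lemma the Jordan map $\Phi\colon\mu\mapsto(\mu^+,\mu^-)$ is measurable from $\cMpm(S)$ into the Borel space $\cM(S)\times\cM(S)$; it is injective since $\mu=\mu^+-\mu^-$, and its inverse is the measurable subtraction $D\colon(\nu_1,\nu_2)\mapsto\nu_1-\nu_2$. It then remains only to see that $\Phi\bigpar{\cMpm(S)}$ is a measurable subset of $\cM(S)\times\cM(S)$, which is where the only real work lies. As $D\circ\Phi$ is the identity, this image equals the fixed-point set $\set{x:(\Phi\circ D)(x)=x}$ of the measurable self-map $\Phi\circ D$ of the Borel space $\cM(S)\times\cM(S)$, and this set is measurable because the diagonal of a Borel space is measurable in the square (Borel spaces are countably separated). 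Hence $\Phi$ is a measurable isomorphism of $\cMpm(S)$ onto a measurable subset of a Borel space, so $\cMpm(S)$ is a Borel space. The main obstacle is the $\gs$-field identification for $\cP(S)$ in the second step; once the Borel structure of $\cP(S)$ is secured, the rest is bookkeeping, the only other nontrivial point being the measurability of the image of the Jordan map.
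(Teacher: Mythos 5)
Your proof is correct, and while its overall skeleton matches the paper's (reduce via \refL{Lborel} to a topological setting, invoke the classical Polishness result, pass to measurable subsets, and handle $\cMpm(S)$ via the Jordan map together with \refL{Labs}), it differs in two genuine ways. First, the paper takes $S$ \emph{compact} metric, so that $\cM(S)$ itself is Polish in one step (citing \cite[Theorem A2.3]{Kallenberg}, with the \gsf{} identification; alternatively just \cite[Theorem 1.5]{Kallenberg-rm}), whereas you start from the Polishness of $\cP(S)$ and reassemble $\cM(S)$ as $\bigpar{(0,\infty)\times\cP(S)}\cup\set{0}$ via $\mu\mapsto\bigpar{\mu(S),\mu/\mu(S)}$; this costs you the extra (correct, but avoidable) bookkeeping with the scaling bijection and the adjoined zero point, which the compactness route sidesteps entirely. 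Second, and more interestingly, for the measurability of the image of the Jordan map the paper identifies that image \emph{explicitly} as the set of mutually singular pairs, $\bigset{(\mu_1,\mu_2)\in\cM(S)^2:|\mu_1-\mu_2|(S)=\mu_1(S)+\mu_2(S)}$, whose measurability is immediate from \refL{Labs}; you instead observe abstractly that since $D\circ\Phi=\mathrm{id}$, the image of $\Phi$ is the fixed-point set of the measurable idempotent $\Phi\circ D$ on $\cM(S)^2$, measurable because the diagonal of a Borel space is measurable in the square. Your argument is slicker in that it needs no characterization of which pairs arise as Jordan decompositions, and it works verbatim for any measurable section--retraction pair; the paper's version is more concrete and makes the image visible, which can be useful elsewhere. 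Both correctly reduce the statement for $\cMpm(S)$ to a measurable isomorphism onto a measurable subset of the Borel space $\cM(S)^2$.
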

\begin{proof}
$\cM(S)$ is Borel as a special case of \cite[Theorem 1.5]{Kallenberg-rm}.
Alternatively,
by \refL{Lborel}, we may assume that $S$ is a compact metric space with its
Borel \gsf.
Then, see \eg{} \cite[Theorem A2.3]{Kallenberg},
$\cM(S)$ is a Polish space, and its Borel \gsf{} equals the \gsf{} defined
above for $\cM(S)$; hence, $\cM(S)$ is a Borel space.

Next, $\cMx(S)$ and $\cP(S)$ are measurable subsets of $\cM(S)$ and thus
also Borel spaces.

Finally, the Jordan decomposition $\mu\mapsto(\mu^+,\mu^-)$ gives a
bijection
\begin{equation}
\psi:  \cMpm(S)\leftrightarrow
\cM':=
\bigset{(\mu_1,\mu_2)\in\cM(S)^2:|\mu_1-\mu_2|(S)=|\mu_1|(S)+|\mu_2|(S)}.
\end{equation}
\refL{Labs} shows that $\psi$ is measurable, and so is trivially
its inverse $\psi\qw:(\mu_1,\mu_2)\mapsto\mu_1-\mu_2$.
Moreover, it follows from \refL{Labs} that the set $\cM'$ is a measurable
subset of $\cM(S)^2$; hence $\cM'$ is a Borel space, and thus so is $\cMpm(S)$.
\end{proof}

\begin{remark}
  \refL{Labs} may fail if $S$ is not a Borel space.
For a  counter-example, let $S=\set{0,1}^\bbR$,
define for $A\subseteq\bbR$ the \gsf{} $\cS_A$ on $S$
consisting of all sets $\pi\qw(B)$ where $\pi:S\to S_A:=\setoi^A$ is the
projection and $B$ is a Borel set in $\setoi^A$, and let
$\cS:=\bigcup\bigset{\cS_A:A\text{ countable}}$.
(This is the Baire \gsf{} on $S$.)
Then any measurable function $F(\mu)$
on $\cMpm(S)$ is a function of $(\mu(B_i))\xoo$ for some sequence $B_i\in\cS$,
which means that $B_i\in\cS_{A_i}$ for some countable $A_i$.
Choose some $x\in\bbR\setminus\bigcup_i A_i$, and define
$s_0,s_1\in S=\setoi^\bbR$ by $s_j(x)=j$ and $s_j(y)=0$ when $y\neq x$.
Let $\nu:=\gd_{s_0}-\gd_{s_1}$. Then $\nu(B_i)=0$ for every $i$, and thus
$F(\nu)=F(0)$. Hence, $F(\mu)$ cannot equal $|\mu|(S)$ for every
$\mu\in\cMpm(S)$. Consequently, $\mu\mapsto|\mu|(S)$ is not measurable on
$\cMpm(S)$.
Similarly, $\cM(S)$ is not a measurable subset of $\cMpm(S)$.
\end{remark}

\subsection{Kernels}\label{SSkernels}
(See \eg{} 
\cite[pp.~20--21, 106--107, 116 and 141--142]{Kallenberg}.)
Given two measurable spaces $S=(S,\cS)$ and $T=(T,\cT)$,
a \emph{kernel} from $S$ to $T$ is a 
measurable mapping $\mu:S\to\cM(T)$.
We write the mapping as $s\mapsto\mu_s$; thus a kernel is,
equivalently, a family $\set{\mu_s}_{s\in S}$ of finite measures on $T$
such that $s\mapsto \mu_s(B)$ is measurable for every $B\in\cT$.
It follows that if $\mu:S\to\cM(T)$ is a kernel and
$f:T\to\ooo$ is measurable, then $s\mapsto\mu_s(f)$ is measurable
$S\to[0,\infty]$.
A \emph{probability kernel} is a kernel that maps $S$ into $\cP(T)$,
\ie, a kernel $\mu$
such that $\mu_s$ is a probability measure for every $s\in S$.

If $\mu$ is a probability kernel from $S$ to $T$ 
and $\nu$ is a probability measure on $S$, then a probability
measure $\nu\tensor\mu$ is
defined on $S\times T$ by
\begin{equation}\label{tensor}
  \nu\tensor\mu(B)=\int_S\dd\nu(s)\int_T\etta_{B}(s,t)\dd\mu_s(t),
\qquad B\in \cS\times \cT.
\end{equation}
Note that if the random element $(X,Y)\in S\times T$  
has the distribution $\nu\tensor\mu$, then the marginal distribution of $X$
is $\nu\in\cP(S)$;  
we denote the marginal distribution of $Y$ by $\nu\mdot\mu\in\cP(T)$.

If $X$ and $Y$ are random elements of $S$ and $T$, respectively, then a 
\emph{regular conditional distribution} of $Y$ given $X$
is a probability kernel $\mu$ from $S$ to $T$  such that
for each $B\in \cT$,
$\P\bigpar{Y\in B\mid X} = \mu_X(B)$ a.s.
(I.e., $\mu_X(B)$ is a version of the conditional expectation 
$\P\bigpar{Y\in B\mid X}$.)
This is easily seen to be equivalent to:
$\mu$ is a probability kernel such that 
$(X,Y)$ has the distribution
$\cL(X)\tensor\mu$ given by \eqref{tensor}.

If $\mu$ is a \pk{} from a measurable space $S$ to itself, and
$\mu_0\in\cP(S)$ is any distribution, we can iterate \eqref{tensor} and
define, for any $N\ge1$, 
a probability measure $\mu_0\tensor\mu\tensor\dotsm\tensor\mu$ on
$S^{N+1}$ such that if $(X_0,\dots,X_N)$ has this distribution, then
$X_0,\dots,X_N$ is a Markov chain with initial distribution $X_0\sim\mu_0$
and transitions given by the kernel $\mu$, \ie,
$\P\bigpar{X_n\in B\mid X_0,\dots,X_{n-1}}
= \P\bigpar{X_n\in B\mid X_{n-1}}=\mu_{X_{n-1}}(B)$
for any $B\in\cS$ and $1\le n\le N$.
Moreover, 
these finite Markov chains extend to  
an infinite Markov chain $X_0,X_1,\dots$ with the transition kernel $\mu$.

\begin{remark}
The existence of an  infinite Markov chain follows 
without any condition on $S$
by a theorem by
Ionescu Tulcea
\cite[Theorem 6.17]{Kallenberg}.
(If $S$ is a Borel space, we may also, as an alternative,
use Kolmogorov's theorem
\cite[Theorem 6.16]{Kallenberg}.)

The construction of an infinite Markov chain extends to any sequence of
different measurable spaces $S_0,S_1,\dots$ 
and \pk{s} $\mu_i$ from $S_{i-1}$ to $S_i$, $i\ge1$, but we need here only the
homogeneous case.
\end{remark}

\subsection{Two lemmas}

\begin{lemma}\label{L1}
Let $S=(S,\cS)$ and $T=(T,\cT)$ be measurable spaces.
A map $\mu:S\to\cM(\cM(T))$ is a kernel 
from $S$ to $\cM(T)$
if and only if, for every
bounded
measurable function $h:T\to\ooo$, 
\begin{equation}\label{mus}
  s\mapsto \int_{\cM(T)} e^{- \nu(h)}  \dd\mu_s(\nu)
\end{equation}
is measurable on $S$.
\end{lemma}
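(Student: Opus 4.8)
The plan is to prove the two implications separately: the forward (``only if'') direction is immediate, while the reverse (``if'') direction is the substantive one and will be handled by a functional monotone class argument.

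For the forward direction, suppose $\mu$ is a kernel from $S$ to $\cM(T)$. I would first note that for fixed bounded measurable $h\ge0$ the function $g_h:\nu\mapsto e^{-\nu(h)}$ is a \emph{bounded} measurable function on $\cM(T)$: the evaluation $\nu\mapsto\nu(h)$ is measurable $\cM(T)\to[0,\infty]$ (as recalled in \refSS{SSspaces}), and $x\mapsto e^{-x}$ is continuous, so $g_h$ is measurable with values in $(0,1]$. Since $\mu$ is a kernel, the fact recalled in \refSS{SSkernels}, that $s\mapsto\mu_s(g)$ is measurable for every measurable $g\ge0$, applies to $g_h$ and shows that \eqref{mus}, which is exactly $s\mapsto\mu_s(g_h)$, is measurable.

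For the reverse direction, assume \eqref{mus} is measurable for every bounded measurable $h\ge0$, and let $\mathcal{H}$ be the collection of all bounded measurable $g:\cM(T)\to\bbR$ such that $s\mapsto\mu_s(g)=\int g\dd\mu_s$ is measurable on $S$. I would check that $\mathcal{H}$ is a vector space containing the constants: linearity of the integral over the finite measure $\mu_s$ gives $\mu_s(ag+bg')=a\mu_s(g)+b\mu_s(g')$, and the constant $1$ arises from $h=0$, for which \eqref{mus} is $s\mapsto\mu_s(\cM(T))$. Moreover $\mathcal{H}$ is closed under bounded pointwise limits: if $g_n\to g$ pointwise with $|g_n|\le M$ and $g_n\in\mathcal{H}$, then dominated convergence for the finite measure $\mu_s$ gives $\mu_s(g_n)\to\mu_s(g)$ for each $s$, so $s\mapsto\mu_s(g)$ is a pointwise limit of measurable functions, hence measurable. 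The hypothesis says precisely that the class $\mathcal{D}:=\set{g_h:h\text{ bounded measurable},\ h\ge0}$ lies in $\mathcal{H}$, and $\mathcal{D}$ is multiplicative since $g_{h_1}g_{h_2}=g_{h_1+h_2}$ with $h_1+h_2$ again bounded and nonnegative, while $1=g_0\in\mathcal{D}$.

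The crux is then to identify $\sigma(\mathcal{D})$ with the given \gsf{} of $\cM(T)$, namely the one generated by the evaluations $\nu\mapsto\nu(B)$, $B\in\cT$. One inclusion is clear because each $g_h$ is a measurable function of those evaluations. For the other inclusion, taking $h=\etta_B$ yields $g_h:\nu\mapsto e^{-\nu(B)}\in\mathcal{D}$, and since $\nu(B)<\infty$ one recovers the evaluation by $\nu(B)=-\log\bigpar{e^{-\nu(B)}}$, a Borel function of an element of $\mathcal{D}$; hence every evaluation is $\sigma(\mathcal{D})$-measurable and $\sigma(\mathcal{D})$ is the full \gsf. By the functional monotone class theorem, $\mathcal{H}$ then contains every bounded $\sigma(\mathcal{D})$-measurable function, i.e.\ every bounded measurable function on $\cM(T)$; in particular $\etta_{\mathcal{B}}\in\mathcal{H}$ for each measurable $\mathcal{B}\subseteq\cM(T)$, so $s\mapsto\mu_s(\mathcal{B})$ is measurable and $\mu$ is a kernel. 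The step I expect to require the most care is verifying that the hypotheses of the monotone class theorem all hold simultaneously (vector space, constants, closure under bounded limits, and a multiplicative generating class with $\sigma(\mathcal{D})$ the right \gsf), rather than any single computation.
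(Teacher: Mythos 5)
Your proof is correct and takes essentially the same route as the paper: a functional monotone class argument applied to the multiplicative family $\Psi_h(\nu)=e^{-\nu(h)}$, $h$ bounded measurable and nonnegative. The only difference is cosmetic: to show the evaluations $\nu\mapsto\nu(B)$ are measurable with respect to the \gsf{} generated by this family, you apply $-\log$ to $e^{-\nu(B)}$ (valid since $\nu(B)<\infty$), whereas the paper uses the pointwise limit $n\bigpar{1-e^{-\nu(h)/n}}\to\nu(h)$; both are equally fine.
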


\begin{proof}
If $h:T\to\ooo$ is measurable, then $\nu\to e^{-\nu(h)}$ is measurable
$\cM(T)\to\oi$. 
Hence, if $\mu$ is a kernel, then \eqref{mus} is measurable.

Conversely, 
let $B_+(T)$ be the set of all bounded measurable $h:T\to\ooo$ and
assume that \eqref{mus} is measurable for every $h\in B_+(T)$.
Let $\cA$ be the set of bounded measurable functions $F:\cM(T)\to\bbR$ such
that
$s\mapsto \int_{\cM(T)} F(\nu)\dd\mu_s(\nu)$ is measurable $S\to\bbR$.
Furthermore, 
if $h\in B_+(T)$, let
$\Psi_h:\cM(T)\to\oi$ be the function $\Psi_h(\nu)=e^{-\nu(h)}$,
and let $\cC:=\bigset{\Psi_h:h\in B_+(T)}$.
The assumption says that
$\cC\subset\cA$.
Furthermore, $\cC$ is closed under multiplication, since 
$\Psi_{h_1}\Psi_{h_2}=\Psi_{h_1+h_2}$.
It follows by the  monotone class theorem,
in \eg{} the version given in \cite[Theorem A.1]{SJIII},
that $\cA$ contains every bounded function that is measurable with respect
to the \gsf{} $\cF(\cC)$ generated by $\cC$.

Let again $h\in B_+(T)$.
Then, for every $\nu\in\cM(T)$,
\begin{equation}
  n\bigpar{\Psi_0(\nu)-\Psi_{h/n}(\nu)}
= n\bigpar{1-e^{-\nu(h)/n}}
\to \nu(h)
\qquad\text{ as \ntoo}.
\end{equation}
Hence the mapping $\nu\mapsto h(\nu)$ is $\cF(\cC)$-measurable.
In particular, taking $h=\etta_B$, it follows that $\nu\mapsto \nu(B)$ is 
$\cF(\cC)$-measurable for every $B\in \cT$.
Since these maps generate the \gsf{} of $\cM(T)$, 
it follows that if $D\subseteq\cM(T)$ is measurable, then 
$\etta_D$ is $\cF(\cC)$-measurable, and thus $\etta_D\in\cA$.
This means that
\begin{equation}
  s\mapsto\int_{\cM(T)}\etta_D(\nu)\dd\mu_s(\nu)=\mu_s(D)
\end{equation}
is measurable for all such $D$, which means that $s\mapsto\mu_s$ is measurable.
\end{proof}

We shall also use the following lemma from \cite{Kallenberg}.

\begin{lemma}[{\cite[Lemma 3.22]{Kallenberg}}]\label{L2}
Let $(\mu_s)_{s\in S}$ be a probability kernel from a measurable space $S$ to   
a Borel space $T$. Then there exists a measurable function $f:S\times\oi\to
T$ such that if\/ $U\sim\Uoi$, then 
$f(s,U)$ has the distribution $\mu_s$ for every $s\in S$.
\end{lemma}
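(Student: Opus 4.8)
The plan is to realise each $\mu_s$ by the quantile transform (the inverse-distribution-function method), arranging that the construction is jointly measurable in the parameter $s$ and the uniform variable $u$. First I would reduce to the case where $T$ is a Borel subset of $\oi$. Since $T$ is a Borel space, \refL{Lborel} provides a measurable isomorphism $\phi$ of $T$ onto a Borel subset $T'\subseteq\oi$; the pushed-forward family $\mu'_s:=\phi\sh(\mu_s)$ is again a probability kernel (measurability of $s\mapsto\mu'_s(B)=\mu_s(\phi\qw(B))$ follows from that of $\mu$), and if $f':S\times\oi\to T'$ solves the problem for $(\mu'_s)$, then $f:=\phi\qw\circ f'$ solves it for $(\mu_s)$, since $\phi$ is an isomorphism. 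So I may assume $T\subseteq\oi$ is Borel and view each $\mu_s$ as a probability measure on $\oi$ concentrated on $T$.

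Next, put $F_s(t):=\mu_s([0,t]\cap T)$ for $t\in\oi$, the distribution function of $\mu_s$; by the kernel property $s\mapsto F_s(t)$ is measurable for each fixed $t$. I would then define the quantile function
\begin{equation*}
  f(s,u):=\inf\bigset{t\in\oi:F_s(t)\ge u},\qquad (s,u)\in S\times\oi.
\end{equation*}
For fixed $s$, the classical one-dimensional fact is that if $U\sim\Uoi$ then $f(s,U)$ has distribution function $F_s$, hence law $\mu_s$; this rests on the equivalence $f(s,u)\le t\iff u\le F_s(t)$, which holds because $F_s$ is non-decreasing and right-continuous. That same equivalence is the key to the one genuinely technical point, joint measurability: it gives
\begin{equation*}
  \set{(s,u):f(s,u)\le t}=\set{(s,u):u\le F_s(t)},
\end{equation*}
whose right-hand side is product-measurable since $(s,u)\mapsto F_s(t)-u$ is measurable. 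As the sets $\set{f\le t}$ generate $\cB(\oi)$, the map $f$ is measurable $S\times\oi\to\oi$.

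Finally I would correct $f$ to take values in $T$ rather than merely in $\oi$. For each fixed $s$ the set $\set{u\in\oi:f(s,u)\notin T}$ is Lebesgue-null, since $f(s,U)\sim\mu_s$ puts full mass on $T$. Fixing some $t_0\in T$ (the case $T=\emptyset$ being vacuous) and setting $\tilde f:=f$ on the measurable set $\set{f\in T}$ and $\tilde f:=t_0$ off it, I obtain a measurable map $S\times\oi\to T$ agreeing with $f$ on a full-measure set for every $s$; hence $\tilde f(s,U)\sim\mu_s$, as required. The part to get right is the joint measurability of the quantile function, which the displayed level-set identity settles cleanly.
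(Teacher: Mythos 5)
Your proof is correct and coincides with the intended one: the paper does not prove this lemma itself but cites \cite[Lemma 3.22]{Kallenberg}, and Kallenberg's argument is exactly your construction --- reduce via \refL{Lborel} to a Borel subset $T\subseteq\oi$, apply the right-continuous quantile transform $f(s,u)=\inf\{t:F_s(t)\ge u\}$, obtain joint measurability from the level-set identity $\{(s,u):f(s,u)\le t\}=\{(s,u):u\le F_s(t)\}$, and redefine $f$ on the per-$s$ null set where it leaves $T$. You also handle the needed details (right-continuity of $F_s$ justifying the equivalence, the vacuous case $T=\emptyset$) correctly, so there is nothing to add.
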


\section{\Polya{} urns}\label{Sdef}

In this section, we give formal definitions of the \Polya{} urn model with
an arbitrary colour space $S$.
The state
space of the urn process is $\cM(S)$, or more precisely $\cMx(S)$, since the
process gets stuck and stops when there is no ball left in the urn.

In this section we consider for simplicity only urns 
with replacement and no removals, \ie, all replacements are positive.
See \refS{Sneg} for the more general case.

We treat first the deterministic case defined and studied by
 \cite{BTnew} and \cite{MM}; our model is the same as theirs and we add only
 some technical details as a preparation for the random replacement case.

\subsection{Deterministic replacements}\label{SSdefD}

The replacements are described by a \emph{replacement kernel},
which is a kernel $R=(R_s)_{s\in S}$ from $S$ to itself, 
\ie, a measurable map $S\to \cM(S)$;
the interpretation is that if we draw a ball of colour $s$, then it is
returned together with an additional measure $R_s$.
More formally, we define, for $\mu\in\cMx(S)$, 
a function $\phi_\mu:S\to\cMx(S)$ by
\begin{equation}\label{phimu}
  \phi_\mu(s):=\mu+R_s;
\end{equation}
thus if the composition of the urn is described by
the measure $\mu$, and we draw a ball of
colour $s$, then the new composition of the urn is
$\phi_\mu(s)$.
Moreover, 
the ball is drawn with distribution
$\mu':=\mu/\mu(S)$. Hence, letting $\phix_\mu:\cP(S)\to\cP(\cMx(S))$ denote the
mapping of probability measures induced by $\phi_\mu$, 
the composition after the draw has the distribution
\begin{equation}\label{RR}
\RR_\mu:= \phix_\mu\bigpar{\mu'}
= \phix_\mu\bigpar{\mu/\mu(S)}\in\cP(\cMx(S)).
\end{equation}

\begin{lemma}\label{LD}
  The mapping $\mu\mapsto\RR_\mu$
defined by  \eqref{phimu}--\eqref{RR}
is  a measurable map $\cMx(S)\to\cP(\cMx(S))$,
\ie,
a probability kernel from $\cMx(S)$ to itself.
\end{lemma}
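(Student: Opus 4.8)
The plan is to verify measurability of $\mu\mapsto\RR_\mu$ by applying \refL{L1} with $S$ replaced by $\cMx(S)$ and $T=S$, which reduces the problem to checking that for every bounded measurable $h:S\to\ooo$ the map
\begin{equation}
  \mu\mapsto \int_{\cM(S)} e^{-\nu(h)}\dd\RR_\mu(\nu)
\end{equation}
is measurable on $\cMx(S)$. Since $\RR_\mu=\phix_\mu(\mu/\mu(S))$ is the pushforward under $\phi_\mu$ of the probability measure $\mu/\mu(S)$, the change-of-variables formula turns this integral into an integral over $S$ against $\mu/\mu(S)$, namely $\int_S e^{-\phi_\mu(s)(h)}\dd\mu'(s)$.

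The next step is to simplify the integrand using the explicit form \eqref{phimu}. Because $\phi_\mu(s)=\mu+R_s$, we have $\phi_\mu(s)(h)=\mu(h)+R_s(h)$, so the integral factors as
\begin{equation}
  \int_{\cMx(S)} e^{-\nu(h)}\dd\RR_\mu(\nu)
  = e^{-\mu(h)}\,\frac{1}{\mu(S)}\int_S e^{-R_s(h)}\dd\mu(s).
\end{equation}
Thus I would establish measurability of each factor separately as a function of $\mu\in\cMx(S)$. The factor $e^{-\mu(h)}$ is measurable because $\mu\mapsto\mu(h)$ is measurable for fixed $h\ge0$ (noted in \refSS{SSspaces}), and $\mu\mapsto\mu(S)$ is measurable and strictly positive on $\cMx(S)$, so its reciprocal is measurable.

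The main obstacle is the remaining factor $\mu\mapsto\int_S e^{-R_s(h)}\dd\mu(s)$, where the integrand itself depends on $s$ through the replacement kernel $R$. The key observation is that, since $R$ is a kernel from $S$ to $S$ and $h\ge0$ is bounded measurable, the map $s\mapsto R_s(h)$ is measurable $S\to\ooo$ (this is precisely the defining property of a kernel recalled in \refSS{SSkernels}), and hence $g(s):=e^{-R_s(h)}$ is a bounded measurable function on $S$. Therefore $\mu\mapsto\mu(g)=\int_S g\dd\mu$ is measurable on $\cM(S)$, again by the fact that $\mu\mapsto\mu(f)$ is measurable for every fixed measurable $f\ge0$. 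Combining the three measurable factors, the whole expression is measurable in $\mu$, and \refL{L1} then yields that $\mu\mapsto\RR_\mu$ is a kernel, \ie{} a measurable map $\cMx(S)\to\cP(\cMx(S))$; it remains only to note that $\RR_\mu$ is a probability measure for each $\mu$, which is immediate since it is the pushforward of a probability measure. (One should double-check that $\RR_\mu$ is supported on $\cMx(S)$ rather than all of $\cM(S)$, but this is clear because $\phi_\mu(s)=\mu+R_s\ge\mu\neq0$.)
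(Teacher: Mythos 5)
Your proposal is correct and takes essentially the same route as the paper's own proof: both apply \refL{L1}, use the change of variables to rewrite the integral as $e^{-\mu(h)}\mu(S)\qw\int_S e^{-R_s(h)}\dd\mu(s)$, and invoke the kernel property of $R$ to get measurability of $s\mapsto R_s(h)$ and hence of $\mu\mapsto\mu(e^{-R_\cdot(h)})$. Your closing observation that $\RR_\mu$ is carried by $\cMx(S)$ because $\phi_\mu(s)=\mu+R_s\neq0$ is exactly the paper's final step as well.
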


\begin{proof}
We use \refL{L1}.
Let $h:S\to\ooo$ be measurable.
Then, by \eqref{RR},
\begin{align}\label{RR+}
%  \begin{split}
  \int_{\cM(S)}e^{-\nu(h)}\dd \RR_\mu(\nu)
&=
  \int_{S}e^{-\phi_\mu(s)(h)}\dd \mu'(s)
=
 \int_{S}e^{-(\mu(h)+R_s(h))}\dd \mu'(s)
\notag
\\&
=
\frac{e^{-\mu(h)}}{\mu(S)} \int_{S}e^{-R_s(h)}\dd \mu(s) .   
%  \end{split}
\end{align}
  Since $R$ is a kernel, $s\mapsto R_s(h)$ is measurable, 
and thus $\mu\mapsto \int_{S}e^{-R_s(h)}\dd \mu(s)$
is a measurable function on $\cM(S)$.
Hence, \eqref{RR+} shows that 
the \lhs{}
$  \int_{\cM(S)}e^{-\nu(h)}\dd \RR_\mu(\nu)$
is a measurable function of $\mu\in\cMx(S)$, and thus \refL{L1} shows that
$\RR:\cMx(S)\to\cM(\cM(S))$
is a kernel from $\cMx(S)$ to $\cM(S)$.

Since  $\mu\in\cMx(S)$ implies $\phi_\mu(s)\in\cMx(S)$  by \eqref{phimu}, 
$\RR$ is also a kernel from $\cMx(S)$ to itself.
Finally, $\RR$ is a probability kernel, since 
$\RR_\mu$ is a probability measure by \eqref{RR}.
\end{proof}

The \Polya{} urn process $(X_n)\xoo$ is the Markov process
with values in $\cMx(S)$ defined as in \refSS{SSkernels}
by the probability kernel $\RR$ and an arbitrary
initial state $X_0\in\cMx(S)$.
(In general $X_0$ may be random, but we assume for simplicity that  $X_0$ is
deterministic; this is also the case in most applications.)

\begin{example}\label{E1}
We illustrate the definition with a classical example.

  Let $S$ be any measurable space and let the replacement kernel be
  $R_s=\gd_s$, i.e., $R_s(B)=\etta_B(s)$ for $s\in S$ and $B\in\cS$.
This means that the drawn ball is returned together with another ball of the
same colour.
(Note that $\gd_s$ is well defined even if $\set{s}\notin \cS$.)

With $S=\set{0,1}$ and $X_0$ an integer-valued measure, 
this is the urn studied by 
\citet{Markov1917},
\citet{EggPol1923} and \citet{Polya1930}.

The case when
$S$ is an arbitrary Polish space
and $X_0\in\cM(S)$ is arbitrary was
studied by \citet{BlackwellMcQ};
they showed that
$X_n/X_n(S)$
\as{} converges (in total variation) to a random discrete probability measure,
with a so called Ferguson distribution.
See also \citet[Exercises 2.2.6 and 0.3.2, and Section 3.2]{Pitman} 
(the case $S=\oi$, which is no loss of generality by \refL{Lborel}),
which imply that the limit can be represented as $\sum_i P_i\gd_{\xi_i}$
with $\xi_i$ \iid{} with distribution $X_0/X_0(S)$ and $(P_i)_1^\infty$ with the
Poisson--Dirichlet distribution $\PD(0,X_0(S))$.
By \refL{Lborel}, the result of \cite{BlackwellMcQ} 
extends to any Borel space $S$. 
In fact, the result holds for an arbitrary measurable space $S$; this can 
for example be seen by considering the same process on $S\times\oi$, 
starting with $X_0\times\gl$, regarding the second coordinate as labels
and using the result for $\oi$; we omit the details.
\end{example}

\subsection{Random replacement}\label{SSdefR}

For the more general version with random replacement, 
the replacement measures $R_s$, $s\in S$ are random.
We let 
$\cR_s:=\cL(R_s)\in\cP(\cM(S))$ for every $s\in S$; $\cR_s$ is thus the
distribution of the replacement, and we assume that
$s\mapsto\cR_s$ is a given probability kernel $S\to\cP(\cM(S))$.
This means that $\phi_\mu(s)$ in \eqref{phimu} is a random measure in
$\cMx(S)$,
with a distribution that we denote by $\Phi_\mu(s)\in\cP(\cMx(S))$.
Note that for a fixed $\mu\in\cMx(S)$,
the map $\psi_\mu:\nu\mapsto\mu+\nu$ is measurable $\cM(S)\to\cMx(S)$, 
and thus induces a 
measurable map $\psix_\mu:\cP(\cM(S))\to \cP(\cMx(S))$; furthermore,
\begin{equation}
  \label{kok}
\Phi_\mu(s)=\psix_\mu(\cR_s).
\end{equation}
Hence, $s\mapsto\Phi_\mu(s)$ is a probability kernel from $S$ to $\cMx(S)$.

If we draw from an urn with composition $\mu\in\cMx(S)$, then the drawn
colour $s$ has as above
distribution $\mu':=\mu/\mu(S)$,
and the resulting urn has thus a distribution $\RR_\mu$ 
that is the corresponding
mixture of the distributions $\Phi_\mu(s)$,
\ie, 
in  the notation of \refSS{SSkernels},  
see \eqref{tensor} and the comments after it,
\begin{equation}\label{kqk}
  \RR_\mu=\mu'\mdot\Phi_\mu.
\end{equation}

\begin{lemma}\label{LR}
  The mapping $\mu\mapsto\RR_\mu$
defined by  \eqref{kok}--\eqref{kqk}
is  a measurable map $\cMx(S)\to\cP(\cMx(S))$,
\ie,
a probability kernel from $\cMx(S)$ to itself.
\end{lemma}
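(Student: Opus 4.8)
The plan is to follow the blueprint of \refL{LD}, again invoking \refL{L1} to reduce measurability of the kernel $\mu\mapsto\RR_\mu$ to measurability of a single scalar integral for each bounded measurable $h:S\to\ooo$. First I would fix such an $h$ and compute
\begin{equation*}
  \int_{\cM(S)}e^{-\nu(h)}\dd\RR_\mu(\nu).
\end{equation*}
By \eqref{kqk} and the definition of the marginal $\mu'\mdot\Phi_\mu$ (see \eqref{tensor} and the comments after it), this integral equals
\begin{equation*}
  \int_S\dd\mu'(s)\int_{\cMx(S)}e^{-\nu(h)}\dd\Phi_\mu(s)(\nu).
\end{equation*}

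Next I would unfold the inner integral using \eqref{kok}, which says $\Phi_\mu(s)=\psix_\mu(\cR_s)$ with $\psi_\mu(\nu)=\mu+\nu$. Pushing the integral back to $\cR_s$ gives
\begin{equation*}
  \int_{\cMx(S)}e^{-\nu(h)}\dd\Phi_\mu(s)(\nu)
  =\int_{\cM(S)}e^{-(\mu+\rho)(h)}\dd\cR_s(\rho)
  =e^{-\mu(h)}\int_{\cM(S)}e^{-\rho(h)}\dd\cR_s(\rho).
\end{equation*}
Substituting $\dd\mu'(s)=\dd\mu(s)/\mu(S)$, the whole expression becomes
\begin{equation*}
  \frac{e^{-\mu(h)}}{\mu(S)}\int_S\biggpar{\int_{\cM(S)}e^{-\rho(h)}\dd\cR_s(\rho)}\dd\mu(s),
\end{equation*}
which is the exact analogue of \eqref{RR+}, with the deterministic factor $e^{-R_s(h)}$ replaced by its expectation under the replacement law $\cR_s$.

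The main point to verify carefully — and what I expect to be the crux — is the measurability of the inner function $s\mapsto g_h(s):=\int_{\cM(S)}e^{-\rho(h)}\dd\cR_s(\rho)$. But this is precisely \refL{L1} applied to the given probability kernel $s\mapsto\cR_s$ from $S$ to $\cM(S)$: since $h\in B_+(S)$, the map in \eqref{mus} is measurable, so $g_h$ is measurable on $S$. Once $g_h$ is known measurable, $\mu\mapsto\int_S g_h(s)\dd\mu(s)$ is measurable on $\cM(S)$ (it is of the form $\mu\mapsto\mu(g_h)$, measurable by the remarks in \refSS{SSspaces}), and $\mu\mapsto e^{-\mu(h)}/\mu(S)$ is plainly measurable on $\cMx(S)$. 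Hence the displayed expression is a measurable function of $\mu$, so by \refL{L1} the map $\RR:\cMx(S)\to\cM(\cM(S))$ is a kernel from $\cMx(S)$ to $\cM(S)$.

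Finally I would note, exactly as in \refL{LD}, that the output lands in the right space and is a probability measure: since $\mu\in\cMx(S)$ and every replacement is positive, $\mu+\rho\in\cMx(S)$, so $\RR$ is a kernel from $\cMx(S)$ to itself; and $\RR_\mu\in\cP(\cMx(S))$ because it is the marginal distribution $\mu'\mdot\Phi_\mu$ of a probability measure. This completes the argument. Compared to \refL{LD}, the only substantive difference is the extra averaging over $\cR_s$, so the proof is a short adaptation rather than a new construction.
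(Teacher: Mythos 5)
Your proposal is correct and follows essentially the same route as the paper: the same computation reducing $\int e^{-\nu(h)}\dd\RR_\mu(\nu)$ via \eqref{kqk} and \eqref{kok} to $\frac{e^{-\mu(h)}}{\mu(S)}\int_S\int_{\cM(S)}e^{-\rho(h)}\dd\cR_s(\rho)\dd\mu(s)$, followed by \refL{L1} to conclude that $\RR$ is a kernel, and the same final observations that $\RR_\mu$ is a probability measure supported on $\cMx(S)$. Your appeal to the easy direction of \refL{L1} for the measurability of $s\mapsto\int_{\cM(S)}e^{-\rho(h)}\dd\cR_s(\rho)$ is just the paper's direct use of the fact that $s\mapsto\cR_s(f)$ is measurable for any measurable $f\ge0$ on $\cM(S)$, so the two arguments coincide.
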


\begin{proof}
Let $h:S\to\ooo$ be measurable.
Then, extending \eqref{RR+} in the deterministic case,
by \eqref{kqk} and  \eqref{kok},
\begin{equation}\label{RRR}
  \begin{split}
  \int_{\cM(S)}e^{-\nu(h)}\dd \RR_\mu(\nu)
&=
\int_S  \int_{\cM(S)}e^{-\nu(h)}\dd \Phi_\mu(s)(\nu) \dd\mu'(s)
\\&
=
\int_S  \int_{\cM(S)}e^{-\psi_\mu(\nu)(h)}\dd \cR_s(\nu) \dd\mu'(s)
\\&
=
\frac{e^{-\mu(h)}}{\mu(S)}\int_S  \int_{\cM(S)}e^{-\nu(h)}\dd \cR_s(\nu) \dd\mu(s).
  \end{split}
\end{equation}
Here $\nu\mapsto e^{-\nu(h)}$ is a measurable function $\cM(S)\to\ooo$,
and thus
$s\mapsto \int_{\cM(S)}e^{-\nu(h)}\dd \cR_s(\nu)$ is a measurable function
$S\to\oooo$. 
Consequently, the \rhs{} of \eqref{RRR} is a measurable function of $\mu$,
and \refL{L1} shows that $\RR$ is a  kernel. 
The proof is completed as the proof of \refL{L2}.
\end{proof}

The \Polya{} urn process $(X_n)\xoo$ is,
as in the deterministic case above,
the Markov process
with values in $\cMx(S)$ defined
by the probability kernel $\RR$.

\section{Proof of \refT{T1}}\label{SpfT1}

Let $U\sim\Uoi$.
By \refL{L2}, there exists a measurable function $f:S\times\oi\to\cM(S)$
such that $f(s,U)\sim \cR_s$ for every $s$; i.e., $f(s,U)\eqd R_s$.
In other words, we can use $f(s,U)$ as the replacement measure $R_s$
for the urn $(X_n)\xoo$.

Let $\tS:=S\times\oi$ and define 
\begin{equation}
  \tR_{s,u}:=f(s,u)\times \gl \in \cM(\tS). %\cM(S\times\oi).
\end{equation}
The mapping $\mu\mapsto\mu\times\gl$ is measurable $\cM(S)\to\cM(\tS)$;
hence $\tR_{s,u}$ is measurable, and thus a kernel from $\tS$ to itself.

We now let $(\tX_n)\xoo$ be the \Polya{} urn process in $\cM(\tS)$ defined
by the replacement kernel $\tR$, with initial value $\tX_0=X_0\times \gl$.
We claim that we can couple the processes such that 
$\tX_n=X_n\times\gl$ for every $n\ge0$.
We prove this by induction.
Given $\tX_n=\mu\times\gl$, we draw a ball $(s,u)$ with  the
distribution $(\mu\times\gl)'=\mu'\times\gl$, which means that $s$ has
distribution $\mu'$ and $u$ is uniform and independent of $s$;
hence, given $s$, $\tR_{s,u}=f(s,u)\times\gl$ has the same distribution as
$f(s,U)\times\gl\eqd R_s\times \gl$. 
We may thus assume (formally by the transfer theorem 
\cite[Theorem 6.10]{Kallenberg})
that $\tR_{s,u}=R_s\times\gl$, and thus 
$\tX_{n+1}=\tX_n+\tR_{s,u}=(X_n+R_s)\times\gl=X_{n+1}\times\gl$.
\qed

\section{Urns without replacement or with other subtractions}
\label{Sneg} 

The models in \refS{Sdef} can easily be extended to 
urns without replacement or with removals (subtractions) of other balls.

\subsection{Deterministic replacements}\label{SSdefD-}
In the deterministic case, we 
let the replacements $R_s$ be given by a measurable map $S\to\cMpm(S)$.
We assume that 
we are given some measurable subset $\cmo$ of $\cMx(S)$
such that 
for every $\mu\in\cmo$,
\begin{equation}\label{rso}
  R_s+\mu \in\cmo
\quad\text{for $\mu$-\aex{} $s$}.
\end{equation}
I.e., by \eqref{phimu}, $\phi_\mu(s)\in\cmo$ %\subseteq\cMx(S)$  
$\mu$-\aex{},
which means that $\RR_\mu$ in \eqref{RR} is a probability measure on 
$\cmo\subseteq\cMx(S)$.
 \refL{LD} is modified to say that $\RR$ is a probability kernel from $\cmo$ to
itself; the proof is the same.
Then, assuming also
$X_0\in\cmo$,
the \Polya{} urn process is defined by the kernel $\RR$ as before;
we have $X_n\in\cmo$ for every $n$.

\begin{example}[Drawing without replacement]\label{E-1}
%  The standard example is drawing without replacement.
Let 
$S$ be a Borel space and let $\cmo$ be the set $\cNx(S)$
of non-zero finite integer-valued
  measures on $S$; these are the measures of the type $\sum_1^m\gd_{s_i}$
  for some finite sequence $s_1,\dots,s_m$ in $S$.
(The set $\cNx(S)$ is a measurable set in $\cM(S)$, %and thus a Borel space,
\eg{} as a consequence of \cite[Theorem 1.6]{Kallenberg-rm}.)
Assume that
\begin{equation}\label{e-1}
  R_s+\gd_s\in\cNx(S)
\quad\text{for every $s\in S$}.
\end{equation}
The interpretation is that the drawn ball is discarded, and instead we add
a set of balls described by the (positive, integer-valued) measure
$R_s+\gd_s$; this is thus the classical case of drawing without replacement,
see \ref{U<0} in \refS{S:intro}.
The \eqref{rso} holds, and thus a \Polya{} urn process is defined for any
initial $X_0\in\cNx(S)$.
If the urn is balanced, this is essentially the same as "$\kappa$-discrete
MVPPs" in \cite{MM}.
\end{example}

\begin{example}\label{E-2}
Let $S$ be a Borel space and consider an urn process with colour space
$S\times\oi$.
Let $\cmo:=\set{\mu\times\gl:\mu\in\cNx(S)}\subset\cMx(S\times\oi)$,
amd assume that the   replacements $R_{s,u}$ are such that
\begin{equation}
  R_{s,u}+\gd_s\times\gl \in\cmo
\quad\text{for every $s\in S$ and $u\in\oi$}.
\end{equation}
Then \eqref{rso} holds (in $S\times\oi$), and thus any $X_0\in\cmo$
defines a \Polya{} urn process.
\end{example}

\begin{remark}
  We may relax the condition $\cmo\subseteq\cMx(S)$ to
$\cmo\subseteq\cM(S)$; thus allowing $0\in\cmo$ and consequently
$R_s=-\mu$ in \eqref{rso},
which means that we remove all balls from the urn, leaving the urn empty, i.e.,
$X_{n+1}=0$. In this case, we stop the process,
  and define $X_m=0$ for all $m>n$.
Formally, $\RR$ as defined in \eqref{RR} 
then
is a probability kernel from $\cmo\setminus\set0$ to   $\cmo$; 
we extend it to a
kernel from $\cmo$ to $\cmo$ by defining $\RR_0=\gd_0$.
We leave further details for this case to the reader.
\end{remark}

\subsection{Random replacements}\label{SSdefR-}
In the random case, we similarly assume that \eqref{rso} holds \as, for
some measurable $\cmo\subseteq\cMx(S)$, every $\mu\in\cmo$ and
$\mu$-\aex{} $s$. 
We assume that $S$ is a Borel space; then \refL{Labs} implies that
$\cmo$ is a measurable subset of $\cMpm(S)$, and thus so is, for every $\mu$,
\begin{equation}
\cM_\mu:=
%\bigset{\nu\in\cMpm(S):\nu>-\mu}=
\bigset{\nu\in\cMpm(S):\nu+\mu\in\cmo} . 
\end{equation}
Hence the condition is that $\cR$ is a
probability kernel from $S$ to $\cMpm(S)$ such that for every
$\mu\in\cmo$,
$\cR_s(\cM_\mu)=1$
for $\mu$-\aex{} $s$. 
Then the argument in \refSS{SSdefR} shows that
$\RR$ is a probability kernel from $\cmo$ to
itself, and thus defines a \Polya{} urn process for any initial $X_0\in\cmo$.

\refT{T1} holds in this setting too, with the same proof given in
\refS{SpfT1};  the deterministic urn $\tX_n$ in $S\times\oi$ is defined as
in \refSS{SSdefD-} using
 $\tcmo:=\set{\mu\times\gl:\mu\in\cmo}\subseteq\cMx(S\times\oi)$.

\begin{example}[Random drawing without replacement]
Let $\cNx(S)$ be as in \refE{E-1} and assume that $R_s$ is a random
replacement such that \eqref{e-1} holds \as{} for every $s\in S$; as always
we assume also that $s\mapsto \cR_s:=\cL(R_s)\in\cMpm(S)$ is measurable.
Then \eqref{rso} holds \as{} for every $\mu\in\cNx(S)$ and $\mu$-\aex{} $s$,
and thus $R_s$ defines a \Polya{} urn process for any initial $X_0\in\cNx(S)$.

\refT{T1} gives an equivalent urn in $S\times\oi$ with deterministic
replacements.
Note, however, that this deterministic urn is of the type in \refE{E-2}, and
not of the simpler type in \refE{E-1}, as the random urn.
Hence, \refT{T1} may be less useful in this setting.  
\end{example}

\section*{Acknowledgement}
%This work  was partially supported by 
%a grant from the Knut and Alice Wallenberg Foundation.
This work was partially done during the conference
``Modern perspectives of branching in probability''
in M\"unster, Germany, September 2017.
I thank C{\'e}cile Mailler for interesting discussions.

\newcommand\AAP{\emph{Adv. Appl. Probab.} }
\newcommand\JAP{\emph{J. Appl. Probab.} }
\newcommand\JAMS{\emph{J. \AMS} }
\newcommand\MAMS{\emph{Memoirs \AMS} }
\newcommand\PAMS{\emph{Proc. \AMS} }
\newcommand\TAMS{\emph{Trans. \AMS} }
\newcommand\AnnMS{\emph{Ann. Math. Statist.} }
\newcommand\AnnPr{\emph{Ann. Probab.} }
\newcommand\CPC{\emph{Combin. Probab. Comput.} }
\newcommand\JMAA{\emph{J. Math. Anal. Appl.} }
\newcommand\RSA{\emph{Random Struct. Alg.} }
\newcommand\ZW{\emph{Z. Wahrsch. Verw. Gebiete} }
\newcommand\DMTCS{\jour{Discr. Math. Theor. Comput. Sci.} }

\newcommand\AMS{Amer. Math. Soc.}
\newcommand\Springer{Springer-Verlag}
\newcommand\Wiley{Wiley}

\newcommand\vol{\textbf}
\newcommand\jour{\emph}
\newcommand\book{\emph}
\newcommand\inbook{\emph}
\def\no#1#2,{\unskip#2, no. #1,} %(typeset after year) 
\newcommand\toappear{\unskip, to appear}

\newcommand\arxiv[1]{\texttt{arXiv:#1}}
\newcommand\arXiv{\arxiv}

\def\nobibitem#1\par{}


\begin{thebibliography}{99}

\bibitem[Bandyopadhyay and Thacker(2014)]{BTrate}
Antar Bandyopadhyay and Debleena Thacker,
Rate of convergence and large deviation for the infinite color P\'olya urn
schemes. 
\emph{Statist. Probab. Lett.} \textbf{92} (2014), 232--240. 
% Bandyopadhyay, Antar; Thacker, Debleena 
% \arXiv{1310.5751}

\bibitem[Bandyopadhyay and Thacker(2017)]{BTinfty}
Antar Bandyopadhyay and Debleena Thacker,
P\'olya urn schemes with infinitely many colors. 
\emph{Bernoulli} \textbf{23}\no{4B} (2017), 3243--3267. 
%MR3654806 Bandyopadhyay, Antar; Thacker, Debleena 
%\arXiv{1303.7374}

\bibitem[Bandyopadhyay and Thacker(2016+)]{BTnew}
Antar Bandyopadhyay and Debleena Thacker,
A new approach to P\'olya urn schemes and its infinite color
    generalization.
Preprint, 2016.
\arXiv{1606.05317}


\bibitem[Blackwell and  MacQueen(1973)]{BlackwellMcQ}
David Blackwell and  James B. MacQueen,
Ferguson distributions via P\'olya urn schemes.
\emph{Ann. Statist.} \textbf1 (1973), 353--355. 

\bibitem{Cohn} 
Donald L. Cohn,
\book{Measure Theory},
Birkh\"auser, Boston, 1980.

\bibitem[Eggenberger and P\'olya(1923)]{EggPol1923}
F. Eggenberger and G. P\'olya,
\"Uber die Statistik verketteter Vorg\"ange.
\emph{Zeitschrift Angew. Math. Mech.} \textbf3 (1923), 279--289.


\bibitem{SJIII}
Svante Janson,
\book{Gaussian Hilbert Spaces}.
Cambridge Univ. Press, Cambridge, UK, 1997.

\bibitem[Janson(2004)]{SJ154}  
Svante Janson, 
Functional limit theorems for multitype branching processes and generalized
\Polya{} urns.  
\emph{Stoch. Process. Appl.} \textbf{110} (2004),  177--245.


\bibitem[Kallenberg(2002)]{Kallenberg}
Olav Kallenberg.
\book{Foundations of Modern Probability}.
2nd ed., Springer, New York, 2002. 

\bibitem[Kallenberg(2017)]{Kallenberg-rm}
Olav Kallenberg.
\book{Random Measures, Theory and Applications}.
Springer, Cham, Switzerland, 2017. 

\bibitem{Mahmoud}
Hosam M Mahmoud, 
\emph{P\'olya urn models}.
%Texts in Statistical Science Series. 
CRC Press, Boca Raton, FL, 2009. 
%xii+290 pp. ISBN: 978-1-4200-5983-0 

\bibitem[Mailler and  Marckert(2017)]{MM}
C{\'e}cile Mailler and Jean-Fran{\c c}ois Marckert.
    Measure-valued P{\'o}lya processes.
\emph{Electron. J. Probab.} \textbf{22} (2017), no. 26, 1--33.
%\arXiv{1610.09057}

\bibitem[Markov(1917)]{Markov1917} 
A. A. Markov,
Sur quelques formules limites du calcul des probabilit\'es.
(Russian.)
\emph{Bulletin de l'Acad\'emie Imp\'eriale des Sciences}
% titel p\ ryska med undertitel p\ franska
\textbf{11}\no3 (1917), 177--186.
%%
% A. A. Markov, “Sur quelques formules limites du calcul des probabilités”,
% Bulletin de l'Académie des Sciences, 11:3 (1917), 177–186 
%http://mi.mathnet.ru/eng/izv/v11/i3/p177 
%http://www.mathnet.ru/php/archive.phtml?jrnid=im&wshow=contents&option_lang=eng

\bibitem[Parthasarathy(1967)]{Parthasarathy}
K. R. Parthasarathy, %Kalyanapuram Rangachari MathSciNet
\book{Probability Measures on Metric Spaces}. 
%Probability and Mathematical Statistics, No. 3 
Academic Press, New York, 1967.

\bibitem[Pitman(2006)]{Pitman}
Jim Pitman. 
\emph{Combinatorial Stochastic Processes}. 
{\'E}cole d'{\'E}t{\'e} de Probabilit{\'e}s de Saint-Flour
XXXII -- 2002.
%Lectures from the 32nd Summer School on Probability Theory held in
%Saint-Flour, July 7–24, 2002. 
%With a foreword by Jean Picard. 
Lecture Notes in Math. 1875, Springer, Berlin, 2006. 
%x+256 pp. 
%ISBN: 978-3-540-30990-1;

\bibitem[P\'olya(1930)]{Polya1930}
G. P\'olya,
Sur quelques points de la th\'eorie des probabilit\'es.
\emph{Ann. Inst. H. Poincar\'e} \textbf1\no2 (1930), 
117--161.


%\bibitem[??]{??} 




\end{thebibliography}
\end{document}